\numberwithin{equation}{section}
\newtheorem{thm}{Theorem}[section]
\newtheorem{cor}[thm]{Corollary}
\newtheorem{lem}[thm]{Lemma}
\newtheorem{prop}[thm]{Proposition}
\newtheorem{defn}[thm]{Definition}
\newcommand{\coker}{\mbox{Coker}\,}
\newcommand{\Hom}{\mbox{Hom}\,}
\newcommand{\Ext}{\mbox{Ext}\,}
\newcommand{\Tor}{\mbox{Tor}\,}
\newcommand{\cx}{\mbox{cx}\,}
\newcommand{\depth}{\mbox{depth}\,}
\newcommand{\Tr}{\mbox{Tr}\,}
\newcommand{\T}{\mbox{T}\,}
\newcommand{\pd}{\mbox{Pd}\,}
\newcommand{\gd}{\mbox{G-dim}\,}
\newcommand{\wgd}{\mbox{w.g.d}\,}
\newcommand{\cid}{\mbox{CI-dim}\,}
\newcommand{\R}{\mathbb{R}}
\newcommand{\N}{\mathbb{N}}
\begin{document}
\bibliographystyle{amsplain}


\title[a note on the depth formula and vanishing of cohomology]
 {a note on the depth formula and vanishing of cohomology}

\bibliographystyle{amsplain}

     \author[A. Sadeghi]{Arash Sadeghi}

\address{Faculty of Mathematical Sciences and Computer,
Tarbiat Moallem University, Tehran, Iran.}

\address{School of Mathematics, Institute for Research in Fundamental Sciences (IPM), P.O. Box: 19395-5746, Tehran, Iran }
\email{sadeghiarash61@gmail.com}

\keywords{Complete intersection dimension, Depth formula, Gorenstein dimension, Vanishing of cohomology}
\subjclass[2000]{13C15, 13D07, 13D02, 13H10}
\maketitle
\begin{abstract}
It is proved that if one of the finite modules $M$ and $N$, over a local ring $R$, has reducible complexity and has finite Gorenstein dimension then the depth formula holds, provided   $\Tor^R_i(M,N)=0$ for $i\gg0$. We also study the vanishing of cohomology of a module of finite complete intersection dimension.
\end{abstract}
\section{introduction}
Let $R$ be a local ring. Two $R$--modules $M$ and $N$ satisfy the depth formula if
$$\depth_R(M)+\depth_R(N)=\depth R+\depth_R(M\otimes_RN).$$
The depth formula was first studied by Auslander \cite{A2}. Suppose that $\pd_R(M)<\infty$ and that $q$ is
the largest integer such that $\Tor_q^R(M,N)$ is nonzero. Auslander proved that if or $\depth_R(\Tor_q^R(M,N))\leq1$ either $q=0$, then the formula
\begin{equation}\label{32}
\depth_R(M)+\depth_R(N)=\depth R+\depth_R(\Tor_q^R(M,N))-q
\end{equation}
holds \cite[Theorem 1.2]{A2}.

In \cite{HW}, Huneke and Wiegand showed that two $R$--modules $M$ and $N$ over complete intersection rings satisfy the depth formula provided
$\Tor_i^R(M,N)=0$ for $i>0$. In \cite{I}, Iyengar showed that the depth formula holds for two $R$--modules $M$ and $N$, provided one of the modules has finite complete intersection dimension and $\Tor_i^R(M,N)=0$ for all $i>0$.
In \cite{AY}, Araya and Yoshino  generalized  Auslander's original result. More precisely, they proved that the formula (\ref{32}) holds provided one of the modules has finite complete intersection dimension and $\Tor^R_i(M,N)=0$ for $i\gg0$.
In \cite{BJ}, Bergh and Jorgensen proved that the depth formula holds in certain cases over Cohen-Macaulay rings, provided one of the modules has reducible complexity and $\Tor_i^R(M,N)=0$ for $i>0$.

In this paper, we generalize the Auslander's original result for a module of finite Gorenstein dimension and with reducible complexity.

In section 1, we prove that the formula (\ref{32}) holds provided one of the modules has reducible complexity and has finite Gorenstein dimension
and $\Tor_i^R(M,N)=0$ for $i\gg0$, which is a generalization of \cite[Theorem 2.5]{AY}. Also it can be viewed as a generalization of \cite[Corollary 3.4]{BJ}.

In section 2, we study the vanishing of cohomology of a module of finite complete intersection dimension over a local ring. For an $R$--module $M$ of finite weak Gorenstein dimension and an $R$--module $N$ of finite complete intersection dimension and complexity $c$, it is shown that if there exist an odd number $q\geq1$, and a number $n>\wgd_R(M)$
such that $\Ext^i_R(M,N)=0$ for $i\in\{n,n+q,\cdots,n+cq\}$, then $\Ext^i_R(M,N)=0$ for all $i>\wgd_R(M)$(see Theorem \ref{t1}).
As a consequence, for two $R$--modules $M$ and $N$ of finite complete intersection dimensions, it is shown that if there exist
an odd number $q\geq1$, and $i>\cid_R(M)$ such that $\Ext^j_R(M,N)=0$, for $j\in\{i,i+q,\cdots,i+cq\}$, where $c=\min\{\cx_R(M),\cx_R(N)\}$, then
$\Ext^j_R(M,N)=0$ for all $j>\cid_R(M)$, (see Corollary \ref{c}).
In Theorem \ref{t2}, it is shown that if $\wgd_R(M)<\infty$ and $N$ has reducible complexity such that $\Ext^i_R(M,N)=0$ for $i\gg0$ then
$\wgd_R(M)=\sup\{i\mid\Ext^i_R(M,N)\neq0\}$.

\section{Preliminaries}
Throughout the paper, $R$ is a commutative Noetherian local ring and all modules are finite (i.e. finitely generated)
$R$--modules. Let $$\cdots\rightarrow F_{n+1} \rightarrow F_{n}\rightarrow F_{n-1}\rightarrow\cdots\rightarrow F_0\rightarrow M\rightarrow0$$ be the minimal free resolution of $M$. Recall that the $n^{\text{th}}$ syzygy of an $R$--module $M$ is the cokernel of the $F_{n+1}\rightarrow F_{n}$ and denoted by $\Omega^n_R(M)$, and it is unique up to isomorphism. The $n^{\text{th}}$ Betti number, denoted $\beta_n^R(M)$, is the rank of the free $R$--module $F_n$.
The complexity of $M$ is defined as follows;
$$\cx_R(M)=\inf\{i\in \N \cup 0\mid \exists \gamma\in \R  \text{ such that } \beta_n^R(M)\leq\gamma n^{i-1}\text{ for } n\gg0\}.$$
Note that $\cx_R(M)=\cx_R(\Omega^i_R(M))$ for every $i\geq0$. It follows from the definition that
$\cx_R(M)=0$ if and only if $\pd_R(M)<\infty$.
The complete intersection dimension was introduced by Avramov, Gasharov and Peeva \cite{AGP}. A module of finite complete intersection dimension behaves homologically like a module over a complete intersection.
Recall that a quasi-deformation of $R$ is a diagram $R\rightarrow A\twoheadleftarrow Q$ of local homomorphisms, in which
$R\rightarrow A$ is faithfully flat, and $A\twoheadleftarrow Q$ is surjective with kernel generated by a regular sequence.
The module $M$ has finite complete intersection dimension if there exists such a quasi-deformation for which $\pd_Q(M\otimes_RA)$ is finite.
The complete intersection dimension of $M$, denoted $\cid_R(M)$, is defined as follows;
$$\cid_R(M)=\inf\{\pd_Q(M\otimes_RA)-\pd_Q(A)\mid R\rightarrow A\twoheadleftarrow Q \text{ is a quasi-deformation }\}.$$
By \cite[Theorem 5.3]{AGP}, every module of finite complete intersection dimension has finite complexity.

The concept of modules with reducible complexity was introduced by Bergh \cite{B2}.\\
Let $M$ and $N$ be $R$--modules and consider a homogeneous element $\eta$ in the graded $R$--module
$\Ext^*_R(M,N)=\bigoplus^{\infty}_{i=0}\Ext^i_R(M,N)$. Choose a map $f_{\eta}:\Omega^{|\eta|}_R(M)\rightarrow N$ representing $\eta$, and denote by $K_{\eta}$ the pushout of this map and the inclusion $\Omega^{|\eta|}_R(M)\hookrightarrow F_{|\eta|-1}$.
Therefore we obtain a commutative diagram
$$\begin{CD}
&&&&&&&&\\
  \ \ &&&&  0@>>>\Omega^{|\eta|}_R (M) @>>> F_{|\eta|-1}@>>>\Omega^{|\eta|-1}_R (M) @>>>0&  \\
                                &&&&&& @VV{f_{\eta}}V @VV V @VV{\parallel} V\\
  \ \  &&&& 0@>>> N @>>> K_{\eta} @>>>\Omega^{|\eta|-1}_R (M)@>>>0.&\\
\end{CD}$$\\
with exact rows. Note that the module $K_{\eta}$ is independent, up to isomorphism, of the map $f_{\eta}$ chosen to represent ${\eta}$.
\begin{defn}\emph{
The full subcategory of $R$-modules consisting of the modules having
reducible complexity is defined inductively as follows:}
\begin{itemize}
        \item[(i)]\emph{ Every $R$-module of finite projective dimension has reducible complexity.}
         \item[(ii)]\emph{ An $R$-module $M$ of finite positive complexity has reducible complexity if
                    there exists a homogeneous element $\eta\in\Ext^{*}_R(M,M)$, of positive degree,
                         such that $\cx_R(K_{\eta}) < \cx_R(M)$, $\depth_R(M)=\depth_R(K_{\eta})$ and $K_{\eta}$ has reducible complexity.}
\end{itemize}
\end{defn}
By \cite[Proposition 2.2(i)]{B2}, every module of finite complete intersection dimension has reducible complexity. On the other hand, there are modules having reducible complexity but whose complete intersection dimension is infinite (see for example, \cite[Corollarry 4.7]{BJ}).

The notion of the Gorenstein(or G-) dimension was introduced by Auslander \cite{A1}, and developed by Auslander and Bridger in \cite{AB}.
\begin{defn}
An $R$--module $M$ is said to be of $G$-dimension zero whenever
\begin{itemize}
            \item[(i)]{\emph{the biduality map $M\rightarrow M^{**}$ is an isomorphism;}}
            \item[(ii)]{\emph{$\Ext^i_R(M,R)=0$ for all $i>0$;}}
            \item[(iii)]{\emph{$\Ext^i_R(M^*,R)=0$ for all $i>0$.}}
\end{itemize}
\end{defn}
The Gorenstein dimension of $M$, denoted $\gd_R(M)$, is defined to be the infimum of all
nonnegative integers $n$, such that there exists an exact sequence
$$0\rightarrow G_n\rightarrow\cdots\rightarrow G_0\rightarrow  M \rightarrow 0$$
in which all the $G_i$ have $G$-dimension zero.
By \cite[Theorem 4.13]{AB}, if $M$ has finite Gorenstein dimension then $\gd_R(M)=\depth R-\depth_R(M)$.
By \cite[Theorem 1.4]{AGP},
$\gd_R(M)$ is bounded above by the complete intersection dimension, $\cid_R(M)$, of $M$ and if $\cid_R(M)<\infty$ then the equality holds.

The notion of the weak Gorenstein dimension was introduced in \cite{H}. An $R$--module $M$ is said to be of weak Gorenstein dimension zero,
written $\wgd_R(M)=0$, if $\Ext^i_R(M,R)=0$ for all $i>0$. If for some integer $t\geq1$ we have $\Ext^t_R(M,R)\neq0$ and $\Ext^i_R(M,R)=0$ for all $i>t$ then $\wgd_R(M)=t$. In all other cases, i.e. if $\Ext^i_R(M,R)\neq0$
for infinitely many integer $i>0$, then $\wgd_R(M)=\infty$.

Note that, by \cite[Theorem 4.13]{AB}, every module of finite Gorenstein dimension has finite weak Gorenstein dimension and $\gd_R(M)=\wgd_R(M)$. On the other hand, there are modules having finite weak
Gorenstein dimension but whose Gorenstein dimension is infinite (see \cite{JS}).

Let $P_1\overset{f}{\rightarrow}P_0\rightarrow M\rightarrow 0$ be a
finite projective presentation of $M$. The transpose of $M$, $\Tr M$,
is defined to be $\coker f^*$, where $(-)^* := \Hom_R(-,R)$, which satisfies in the exact sequence
\begin{equation} \label{n1}
0\rightarrow M^*\rightarrow P_0^*\rightarrow P_1^*\rightarrow \Tr M\rightarrow 0
\end{equation}
and is unique up to projective equivalence. Thus the minimal projective
presentations of $M$ represent isomorphic transposes of $M$.
Two modules $M$ and $N$ are called \emph{stably isomorphic} and write $\underline{M}\cong\underline{N}$ if $M\oplus P\cong N\oplus Q$ for some projective
modules $P$ and $Q$.

The composed functors $\mathcal{T}_k:=\Tr\Omega^{k-1}$ for $k>0$  introduced by Auslander and Bridger in \cite{AB}. If $\Ext^i_R(M,R)=0$ for some $i>0$, then it is easy to see that $\underline{\mathcal{T}_iM}\cong\underline{\Omega\mathcal{T}_{i+1}M}$.

We frequently use the following Theorem of Auslander and Bridger.
\begin{thm}\cite[Theorem 2.8]{AB}\label{a1}\emph{
Let $M$ be an $R$--module and $n\geq0$ an integer. Then there are exact sequences of functors:
$$0\rightarrow\Ext^1_R(\mathcal{T}_{n+1}M,-)\rightarrow\Tor_n^R(M,-)\rightarrow\Hom_R(\Ext^n_R(M,R),-)
\rightarrow\Ext^2_R(\mathcal{T}_{n+1}M,-),$$
$$\Tor_2^R(\mathcal{T}_{n+1}M,-)\rightarrow(\Ext^n_R(M,R)\otimes_R-)\rightarrow\Ext^n_R(M,-)\rightarrow
\Tor_1^R(\mathcal{T}_{n+1}M,-)\rightarrow0.$$
}
\end{thm}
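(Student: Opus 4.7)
The plan is to build both four-term exact sequences by comparing the long exact sequences that arise from a single projective presentation of $\Omega^n_R(M)$ and its dual. First I would take a minimal projective resolution $\cdots\to F_{n+1}\to F_n\to\cdots\to F_0\to M\to0$ of $M$. The piece $F_{n+1}\to F_n\to\Omega^n_R(M)\to 0$ is a projective presentation of $\Omega^n_R(M)$, so dualizing yields the four-term exact sequence
$$0\to(\Omega^n_R(M))^*\to F_n^*\to F_{n+1}^*\to\mathcal{T}_{n+1}M\to0,$$
which I would split through $L:=\operatorname{im}(F_n^*\to F_{n+1}^*)$ into two short exact sequences
$$(\star):\ 0\to(\Omega^n_R(M))^*\to F_n^*\to L\to0,\qquad(\star\star):\ 0\to L\to F_{n+1}^*\to\mathcal{T}_{n+1}M\to 0.$$
Since the complex $F_\bullet^*$ computes $\Ext^\bullet_R(M,R)$, its cohomology at position $n$ furnishes a third short exact sequence
$$(\heartsuit):\ 0\to\operatorname{im}(F_{n-1}^*\to F_n^*)\to(\Omega^n_R(M))^*\to\Ext^n_R(M,R)\to 0.$$

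Next I would apply $-\otimes_R N$ and $\Hom_R(-,N)$ to $(\star)$ and $(\star\star)$. Projectivity of $F_n^*$ and $F_{n+1}^*$ kills their positive Tor and Ext against $N$, yielding the identifications
$$\Tor^R_1(L,N)\cong\Tor^R_2(\mathcal{T}_{n+1}M,N),\qquad\Ext^1_R(L,N)\cong\Ext^2_R(\mathcal{T}_{n+1}M,N),$$
together with a four-term exact sequence in low degrees from each. Splicing the Tor long exact sequence of $(\star\star)$ with the one obtained by tensoring $(\heartsuit)$ with $N$, and using the natural isomorphism $\Hom_R(F_i^*,N)\cong F_i\otimes_R N$ to identify $\Tor^R_n(M,N)$ with $\ker(\Omega^n_R(M)\otimes_R N\to F_{n-1}\otimes_R N)$, would produce the second exact sequence of the statement. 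A dual splicing using $\Hom_R(-,N)$, combined with the identification of $\Ext^n_R(M,N)$ as the $n$-th cohomology of $\Hom_R(F_\bullet,N)$, would produce the first.

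The main obstacle is the diagram chase required to see that the connecting homomorphisms produced by the snake lemma coincide with the natural evaluation maps $\Ext^n_R(M,R)\otimes_R N\to\Ext^n_R(M,N)$ and $\Tor^R_n(M,N)\to\Hom_R(\Ext^n_R(M,R),N)$. Conceptually, these maps are the cup/evaluation morphisms that fail to be isomorphisms precisely because $(\Omega^n_R(M))^*$ need not agree with $\Ext^n_R(M,R)$, and $\mathcal{T}_{n+1}M$ is the natural receptacle for the obstruction, via $(\heartsuit)$. Once the connecting maps are correctly matched with these evaluation maps, exactness of the spliced sequences is a direct consequence of combining the long exact sequences above.
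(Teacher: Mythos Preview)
The paper does not prove this statement; it is quoted from Auslander--Bridger \cite[Theorem~2.8]{AB} as a tool, with no argument supplied. So there is no ``paper's own proof'' to compare against.

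Your outline is essentially the original Auslander--Bridger argument: dualize a projective presentation of $\Omega^n_R(M)$, break the resulting four-term sequence into the short exact sequences $(\star)$ and $(\star\star)$, and combine them with $(\heartsuit)$. That is the correct skeleton, and the obstacle you flag---matching the connecting maps with the evaluation morphisms---is indeed the only nontrivial point.

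There is, however, a swap in your description of which identification feeds which sequence. Tensoring $(\star)$, $(\star\star)$, $(\heartsuit)$ with $N$ does produce the \emph{second} displayed sequence, but the identification needed there is $\Ext^n_R(M,N)\cong H^n(F_\bullet^{*}\otimes_R N)$, coming from $\Hom_R(F_i,N)\cong F_i^{*}\otimes_R N$; the module $\Tor_n^R(M,N)$ plays no role in that sequence. Conversely, applying $\Hom_R(-,N)$ yields the \emph{first} sequence, and it is \emph{there} that the isomorphism $\Hom_R(F_i^{*},N)\cong F_i\otimes_R N$ and the description $\Tor_n^R(M,N)\cong\ker\bigl(\Omega^n_R(M)\otimes_R N\to F_{n-1}\otimes_R N\bigr)$ are used. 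Once this labeling is untangled, the splicing argument goes through exactly as you indicate.
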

\section{the depth formula}

Let $M$ and $N$ be $R$--modules. In the following, we investigate the connection between the vanishing of homology modules, $\Tor_{i>0}^R(M,N)$, and the vanishing of cohomology modules, $\Ext^{i>0}_R(\Tr M,N)$.
\begin{lem}\label{p3}
\emph{
Let $M$, $N$ be $R$--modules such that $M$ has reducible complexity. If $M$ is of $G$-dimension zero, and
$\Tor_i^R(M,N)=0$ for all $i>0$ then $\Ext^i_R(\Tr M,N)=0$ for all $i>0$.}
\end{lem}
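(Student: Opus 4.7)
The plan is to argue by induction on $\cx_R(M)$. In the base case $\cx_R(M)=0$, the module $M$ has finite projective dimension and, being of $G$-dimension zero, is therefore projective; hence $\Tr M$ is stably zero and the conclusion is trivial. Assume $\cx_R(M)\geq 1$ and that the result is known for modules of strictly smaller complexity. The reducibility hypothesis furnishes $\eta\in\Ext^{t}_R(M,M)$ of positive degree $t$ and a short exact sequence
\[
0\to M\to K_{\eta}\to\Omega^{t-1}_R(M)\to 0
\]
with $\cx_R(K_{\eta})<\cx_R(M)$ and $K_{\eta}$ of reducible complexity. Since $M$ (and hence $\Omega^{t-1}_R(M)$) has $G$-dimension zero, so does the middle term $K_{\eta}$, and the long exact sequence of $\Tor$ combined with $\Tor^R_i(M,N)=0$ for $i>0$ shows $\Tor^R_i(K_{\eta},N)=0$ for $i>0$. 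The inductive hypothesis then applies to $K_{\eta}$ and yields $\Ext^i_R(\Tr K_{\eta},N)=0$ for $i>0$.

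Next I build a short exact sequence of transposes. Applying the horseshoe lemma to projective resolutions of $M$ and $\Omega^{t-1}_R(M)$ to obtain a resolution of $K_{\eta}$, dualizing level by level, and comparing cokernels via the snake lemma produces the six-term exact sequence
\[
0\to(\Omega^{t-1}_R(M))^{*}\to K_{\eta}^{*}\to M^{*}\to \mathcal{T}_t M\to \Tr K_{\eta}\to \Tr M\to 0.
\]
Because $\Ext^1_R(\Omega^{t-1}_R(M),R)=0$, the map $K_{\eta}^{*}\to M^{*}$ is surjective, so the tail collapses to the short exact sequence $0\to \mathcal{T}_t M\to \Tr K_{\eta}\to \Tr M\to 0$. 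Feeding this into the long exact sequence of $\Ext_R(-,N)$ and using $\Ext^i_R(\Tr K_{\eta},N)=0$ for $i>0$ gives an isomorphism $\Ext^i_R(\Tr M,N)\cong\Ext^{i-1}_R(\mathcal{T}_t M,N)$ for $i\geq2$, and exhibits $\Ext^1_R(\Tr M,N)$ as the cokernel of $\Hom_R(\Tr K_{\eta},N)\to\Hom_R(\mathcal{T}_t M,N)$.

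The case $i=1$ is the main obstacle. I invoke the second sequence of Theorem~\ref{a1} with $n=0$, applied in turn to $\Tr K_{\eta}$ and to $\mathcal{T}_t M=\Tr\Omega^{t-1}_R(M)$: using $\Tr\Tr X\cong X$ up to projective summands for $X$ of $G$-dimension zero, together with the vanishing of $\Tor^R_1$ and $\Tor^R_2$ of both $K_{\eta}$ and $\Omega^{t-1}_R(M)$ against $N$, both Hom groups become tensor products, $\Hom_R(\Tr K_{\eta},N)\cong (\Tr K_{\eta})^{*}\otimes_R N$ and $\Hom_R(\mathcal{T}_t M,N)\cong (\mathcal{T}_t M)^{*}\otimes_R N$. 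Since $\Tr M$ is itself of $G$-dimension zero (obtained by dualizing a complete projective resolution of $M$), one has $\Ext^1_R(\Tr M,R)=0$; dualizing the transpose short exact sequence produces a surjection $(\Tr K_{\eta})^{*}\twoheadrightarrow(\mathcal{T}_t M)^{*}$, and tensoring with $N$ preserves surjectivity, forcing $\Ext^1_R(\Tr M,N)=0$.

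For $i\geq 2$, the short exact sequences $0\to\mathcal{T}_{k-1}M\to P^{*}_{k}\to\mathcal{T}_k M\to 0$ (exact for $k\geq 2$ thanks to $\gd_R(M)=0$) supply dimension shifts. For $2\leq i\leq t$, iterating $i-2$ such shifts reduces $\Ext^{i-1}_R(\mathcal{T}_t M,N)$ to $\Ext^1_R(\mathcal{T}_{t-i+2}M,N)$, and the first sequence of Theorem~\ref{a1} with $n=t-i+1\geq 1$ identifies this with $\Tor^R_{t-i+1}(M,N)=0$. For $i\geq t+1$, the stable isomorphism $\underline{\Tr M}\cong\underline{\Omega^{t-1}\mathcal{T}_t M}$ converts the shift into the periodicity $\Ext^i_R(\Tr M,N)\cong\Ext^{i-t}_R(\Tr M,N)$, and a strong induction on $i$ based on $\Ext^1_R(\Tr M,N)=0$ completes the argument. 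The crucial hurdle throughout is the $\Ext^1$ vanishing, which needs both exact sequences of Theorem~\ref{a1} and the preservation of $G$-dimension zero under $\Tr$.
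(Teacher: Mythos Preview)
Your argument is correct, and it is substantially simpler than the proof given in the paper. Both proofs share the same opening: induct on $\cx_R(M)$, pass from the reducing sequence $0\to M\to K_{\eta}\to\Omega^{t-1}_R(M)\to 0$ to the transpose sequence $0\to\mathcal{T}_t M\to\Tr K_{\eta}\to\Tr M\to 0$, and invoke the induction hypothesis to obtain $\Ext^i_R(\Tr K_{\eta},N)=0$ for $i>0$. The divergence is in how the residual obstruction $\Ext^1_R(\Tr M,N)$ is killed. You do it directly: using the naturality of the evaluation map $X^{*}\otimes_RN\to\Hom_R(X,N)$ (which is exactly the middle map in the second sequence of Theorem~\ref{a1} with $n=0$), together with $\Ext^1_R(\Tr M,R)=0$ from $\gd_R(\Tr M)=0$, the map $\Hom_R(\Tr K_{\eta},N)\to\Hom_R(\mathcal{T}_t M,N)$ is surjective, so $\Ext^1_R(\Tr M,N)=0$. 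The remaining $\Ext^i$ then fall by the dimension shifts $\underline{\mathcal{T}_{k-1}M}\cong\underline{\Omega\mathcal{T}_k M}$ and the $t$-periodicity $\Ext^i_R(\Tr M,N)\cong\Ext^{i-t}_R(\Tr M,N)$.

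The paper never isolates $\Ext^1$ this way. Instead it extracts from the same data only the periodicity $\Ext^i_R(\mathcal{T}_{q+1}M,N)\cong\Ext^{i+q+1}_R(\mathcal{T}_{q+1}M,N)$ (here $q=t-1$), which together with the low-degree vanishing coming from Theorem~\ref{a1} leaves a single unknown value at degree $q+1$. To kill that value the paper brings in the \emph{square} $\eta^{2}$: via \cite[Lemma~2.3]{B2} there is a second short exact sequence relating $K_{\eta}$, $K_{\eta^{2}}$ and $\Omega^{q+1}_R(K_{\eta})$, to which the induction hypothesis is applied a second time (to $\Omega^{q+1}_R(K_{\eta})$) in order to obtain $\Ext^i_R(\Tr K_{\eta^{2}},N)=0$. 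This produces a second periodicity, of period $2q+2$, and the interference of the two periods forces the last value to vanish. Your argument bypasses all of this extra machinery; the paper's route, by contrast, is more ``operator-theoretic'' and might be more portable to situations where $\Tr M$ is not known to have $G$-dimension zero. One minor wording issue: the first sequence of Theorem~\ref{a1} gives only an injection $\Ext^1_R(\mathcal{T}_{t-i+2}M,N)\hookrightarrow\Tor^R_{t-i+1}(M,N)$, not an identification, but since the target vanishes this is harmless.
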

\begin{proof}
Set $c=\cx_R(M)$, we argue by induction on $c$. If $c=0$ then $\pd_R(M)<\infty$ and so $\pd_R(M)=\gd_R(M)=0$. Therefore $\Tr M=0$ and we have nothing to prove.
As $\Tor_i^R(M,N)=0$ for all $i>0$, $\Ext^1_R(\mathcal{T}_{i+1}M,N)=0$ for all $i>0$ by Theorem \ref{a1}.
Since $\Ext^i_R(M,R)=0$ for all $i>0$ then $\underline{\mathcal{T}_iM}\cong\underline{\Omega\mathcal{T}_{i+1}M}$ for all $i>0$ and so
\begin{equation}\label{26}
\Ext^i_R(\mathcal{T}_tM,N)=0 \text{ for all } t>1 \text{ and } 1\leq i<t.
\end{equation}
Suppose that $c>0$ and that $\eta\in\Ext^*_R(M,M)$ reduces the complexity of $M$. Consider The exact sequence
\begin{equation}\label{9}
0\rightarrow M\rightarrow K_{\eta}\rightarrow\Omega^q_R(M)\rightarrow0,
\end{equation}
where $|\eta|=q+1$ and $\cx_R(K_{\eta})<c$. Note that $\gd_R(K_{\eta})=\gd_R(M)=0$.
The exact sequence (\ref{9}), induces the long exact sequence
$$\cdots\rightarrow\Tor_i^R(M,N)\rightarrow\Tor_i^R(K_{\eta},N)\rightarrow\Tor_{i+q}^R(M,N)\rightarrow\cdots,$$
of homology modules. Therefore, $\Tor_i^R(K_{\eta},N)=0$ for all $i>0$ and so by induction hypothesis $\Ext^i_R(\Tr K_{\eta},N)=0$ for all $i>0$.
By \cite[Lemma 3.9]{AB}, from the exact sequence (\ref{9}), we obtain the following exact sequence
$$ 0\rightarrow {(\Omega^q_R(M))}^*\rightarrow {K_{\eta}}^*\rightarrow M^*\rightarrow \T(\Omega^q_R(M))\rightarrow \T(K_{\eta})\rightarrow \T(M)\rightarrow0$$
where $\underline{\T(M)}\cong\underline{\Tr M}$, $\underline{\T(K_{\eta})}\cong\underline{\Tr K_{\eta}}$ and $\underline{\T(\Omega^q_R(M))}\cong\underline{\mathcal{T}_{q+1}M}$. Since $\Ext^1_R(\Omega^q_R(M),R)=0$, we get the exact sequence
\begin{equation}\label{27}
0\rightarrow \T(\Omega^q_R(M))\rightarrow \T(K_{\eta})\rightarrow \T(M)\rightarrow0.
\end{equation}
The exact sequence (\ref{27}), induces a long exact sequence
\begin{equation}\label{333}
\cdots\rightarrow\Ext^i_R(\T(M),N)\rightarrow\Ext^i_R(\T(K_{\eta}),N)\rightarrow\Ext^i_R(\T(\Omega^q_R(M)),N)\rightarrow\cdots
\end{equation}
of cohomology modules.
As $\Ext^i_R(\Tr K_{\eta},N)=0$ for all $i>0$, we obtain from the (\ref{333})
$\Ext^{i+1}_R(\Tr M,N)\cong\Ext^i_R(\mathcal{T}_{q+1}M,N)$  for all $i>0$ and since
$\underline{\mathcal{T}_iM}\cong\underline{\Omega\mathcal{T}_{i+1}M}$ for all $i>0$,
\begin{equation}\label{11}
\Ext^i_R(\mathcal{T}_{q+1}M,N)\cong\Ext^{i+1}_R(\Tr M,N)\cong\Ext^{i+q+1}_R(\mathcal{T}_{q+1}M,N)  \text{  for all } i>0.
\end{equation}
Therefore if $q>0$ then by (\ref{11}) and (\ref{26})
\begin{equation}\label{29}
 \Ext^i_R(\mathcal{T}_{q+1}M,N)=0  \text{  for  } i\neq j(q+1) \text{ and } j>0,
\end{equation}
\begin{equation}\label{30}
\Ext^{q+1}_R(\mathcal{T}_{q+1}M,N)\cong\Ext^{j(q+1)}_R(\mathcal{T}_{q+1}M,N) \text{   for all }  j>0.
\end{equation}
By \cite[Lemma 2.3]{B2}, there exists an exact sequence
\begin{equation}
0\rightarrow\Omega^{q+1}_R(K_{\eta})\rightarrow K_{\eta^2}\oplus F\rightarrow K_{\eta}\rightarrow0,
\end{equation}
where $F$ is free. As $\gd_R(K_{\eta})=0$, by \cite[Lemma 3.9]{AB} we obtain the following exact sequence
\begin{equation}\label{12}
0\rightarrow\T(K_{\eta})\rightarrow\T(K_{\eta^2} \oplus F)\rightarrow\T(\Omega^{q+1}_R(K_{\eta}))\rightarrow0,
\end{equation}
where
$\underline{\T(K_{\eta})}\cong\underline{\Tr K_{\eta}}$, $\underline{\T(K_{\eta^2}\oplus F)}\cong\underline{\Tr K_{\eta^2}}$ and
$\underline{\T(\Omega^{q+1}_R(K_{\eta}))}\cong\underline{\mathcal{T}_{q+2}K_{\eta}}$.
The exact sequence (\ref{12}), induces a long exact sequence
\begin{equation}\label{13}
\cdots\rightarrow\Ext^i_R(\T(\Omega^{q+1}_R(K_{\eta})),N)\rightarrow\Ext^i_R(\T(K_{\eta^2}\oplus F),N)\rightarrow\Ext^i_R(\T(K_{\eta}),N)\rightarrow\cdots
\end{equation}
of cohomology modules. Note that by the proof of \cite[Proposition 2.2(ii)]{B2}, $\Omega^{q+1}_R(K_{\eta})$ has also reducible complexity. Therefore, by induction hypothesis, $\Ext^i_R(\mathcal{T}_{q+2}K_{\eta},N)=0$ for all $i>0$ and so by (\ref{13}), $\Ext^i_R(\Tr K_{\eta^2},N)=0$
for all $i>0$.
By \cite[Lemma 3.9]{AB}, from the exact sequence $0\rightarrow M\rightarrow K_{\eta^2}\rightarrow\Omega^{2q+1}_R(M)\rightarrow0$, we obtain the following exact sequence
\begin{equation}\label{14}
0\rightarrow{(\Omega^{2q+1}_R(M))}^*\rightarrow {(K_{\eta^2})}^*\rightarrow M^*\rightarrow\T(\Omega^{2q+1}_R(M))\rightarrow\T(K_{\eta^2})\rightarrow\T(M)\rightarrow0,
\end{equation}
where $\underline{\T(M)}\cong\underline{\Tr M}$, $\underline{\T(\Omega^{2q+1}_R(M))}\cong\underline{\mathcal{T}_{2q+2}M}$ and $\underline{\T(K_{\eta^2})}\cong\underline{\Tr K_{\eta^2}}$. As $\Ext^{2q+2}_R(M,R)=0$, we get the exact sequence
\begin{equation}\label{15}
0\rightarrow\T(\Omega^{2q+1}_R(M))\rightarrow\T(K_{\eta^2})\rightarrow\T(M)\rightarrow0.
\end{equation}
The exact sequence (\ref{15}), induces a long exact sequence
\begin{equation}\label{16}
\cdots\rightarrow\Ext^i_R(\T(M),N)\rightarrow\Ext^i_R(\T(K_{\eta^2}),N)\rightarrow\Ext^i_R(\T(\Omega^{2q+1}_R(M)),N)\rightarrow\cdots
\end{equation}
of cohomology modules.
As $\Ext^i_R(\Tr K_{\eta^2},N)=0$ and $\underline{\mathcal{T}_iM}\cong\underline{\Omega\mathcal{T}_{i+1}M}$ for all $i>0$, by (\ref{16}) we get the following isomorphisms.
\begin{equation}\label{28}
\Ext^i_R(\mathcal{T}_{2q+2}M,N)\cong\Ext^{i+1}_R(\Tr M,N)\cong\Ext^{2q+i+2}_R(\mathcal{T}_{2q+2}M,N)  \text{  for all } i>0.
\end{equation}
If $q=0$ then by (\ref{28}), (\ref{11}) and (\ref{26}), it is obvious that $\Ext^i_R(\Tr M,N)=0$ for all $i>0$. Now if $q>0$ then
by (\ref{28}) and (\ref{26}), $$\Ext^{2q+2}_R(\mathcal{T}_{q+1}M,N)\cong\Ext^{3q+3}_R(\mathcal{T}_{2q+2}M,N)\cong\Ext^{q+1}_R(\mathcal{T}_{2q+2}M,N)=0$$
Therefore by (\ref{29}) and (\ref{30}), $\Ext^i_R(\mathcal{T}_{q+1}M,N)=0$ for all $i>0$ and so $\Ext^i_R(\Tr M,N)=0$ for all $i>0$.
\end{proof}
The following Theorem is a generalization of \cite[Theorem 2.5]{AY}, \cite[Corollary 3.4]{BJ} and also \cite[Theorem 3.1]{BJ}.
\begin{thm}\emph{
Let $M$ and $N$ be $R$--modules and let $\Tor_i^R(M,N)=0$ for $i\gg0$. If $M$ has reducible complexity and $q=\sup\{i\mid\Tor_i^R(M,N)\neq0\}$ then the following statements hold true.
\begin{itemize}
      \item[(i)] If $\gd_R(M)<\infty$ and $q=0$ then $$\depth_R(M)+\depth_R(N)=\depth_R(M\otimes_RN)+\depth R.$$
        \item[(ii)] If $q>0$, $\depth_R(\Tor_q^R(M,N))\leq1$ then
                       $$\depth_R(M)+\depth_R(N)=\depth R+\depth_R(\Tor_q^R(M,N))-q$$
\end{itemize}
}
\end{thm}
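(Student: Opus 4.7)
The plan is to prove (i) first, then reduce (ii) to (i) by induction on $q$. For (i), my strategy is to first reduce to $\gd_R(M)=0$, then apply Lemma \ref{p3} and Theorem \ref{a1}. Setting $n=\gd_R(M)<\infty$, I would replace $M$ by $\Omega^n_R(M)$, which inherits reducible complexity (by the argument recalled in the proof of Lemma \ref{p3}), satisfies $\gd_R(\Omega^n_R(M))=0$, and still has $\Tor^R_i(\Omega^n_R(M),N)=0$ for $i>0$ by dimension shifting. Since all higher Tors vanish, each sequence $0\to\Omega^{i+1}_R(M)\otimes_R N\to F_i\otimes_R N\to\Omega^i_R(M)\otimes_R N\to 0$ is exact; a routine depth-lemma chase combined with the Auslander--Bridger identity $\gd_R(M)=\depth R-\depth_R(M)$ then converts the depth formula for $(\Omega^n_R(M),N)$ into the depth formula for $(M,N)$.

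In the reduced case the equality to be proved becomes $\depth_R(M\otimes_R N)=\depth_R(N)$. Lemma \ref{p3} supplies $\Ext^i_R(\Tr M,N)=0$ for every $i>0$, and Theorem \ref{a1} (with $n=0$) then collapses to $M\otimes_R N\cong\Hom_R(M^*,N)$. To reach the depth equality I would use the four-term exact sequence
\[
0\to\Hom_R(\Tr M,N)\to F_1\otimes_R N\to F_0\otimes_R N\to M\otimes_R N\to 0
\]
coming from any presentation $F_1\to F_0\to M\to 0$. The vanishing of $\Ext^i_R(\Tr M,N)$ for $i>0$ makes the minimal free resolution of $\Tr M$ acyclic under $\Hom_R(-,N)$, so the depth of $N$ propagates to $\Hom_R(\Tr M,N)$; the four-term sequence then forces the desired equality on the cokernel $M\otimes_R N$.

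Part (ii) is then treated by induction on $q\ge 1$. From $0\to\Omega_R(M)\to F\to M\to 0$ the Tor long exact sequence gives $\Tor^R_i(\Omega_R(M),N)\cong\Tor^R_{i+1}(M,N)$ for $i\ge 1$, so the supremum drops from $q$ to $q-1$ while $\Omega_R(M)$ retains reducible complexity. If $q\ge 2$, the inductive hypothesis applied to $(\Omega_R(M),N)$ combined with $\depth_R\Omega_R(M)=\depth_R(M)+1$ passes the formula back to $M$. If $q=1$, part (i) applies to $(\Omega_R(M),N)$, and the required identity is extracted from the four-term sequence $0\to\Tor_1^R(M,N)\to\Omega_R(M)\otimes_R N\to F\otimes_R N\to M\otimes_R N\to 0$ by the standard depth chase used by Auslander in \cite[Theorem 1.2]{A2}, made possible by the hypothesis $\depth_R\Tor_1^R(M,N)\le 1$.

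The delicate step is the Gorenstein-zero case of (i): because $\Tr M$ has infinite projective dimension in general, the depth-lemma iteration along its minimal free resolution does not terminate in finitely many steps, so the propagation of $\depth_R(N)$ onto $\Hom_R(\Tr M,N)$ requires additional input. I expect to supply this by a secondary induction on $\cx_R(M)$ using the defining reducible-complexity sequence $0\to M\to K_\eta\to\Omega^{|\eta|-1}_R(M)\to 0$ together with the identification $M\otimes_R N\cong\Hom_R(M^*,N)$: the base case $\cx_R(M)=0$ is classical Auslander, the induction hypothesis handles $K_\eta$ since $\cx_R(K_\eta)<\cx_R(M)$, and the factor $\Omega^{|\eta|-1}_R(M)$ (of the same complexity as $M$) must be controlled by a supplementary syzygy argument before the depth lemma applied to the tensored short exact sequence delivers $\depth_R(M\otimes_R N)=\depth_R(K_\eta\otimes_R N)=\depth_R(N)$.
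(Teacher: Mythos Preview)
Your reduction of (ii) to (i) has a fatal gap: part~(i) carries the hypothesis $\gd_R(M)<\infty$, but (ii) does not, so at $q=1$ you cannot invoke (i) for $\Omega_R(M)$. The paper proves (ii) by an independent induction on $\cx_R(M)$, with Auslander's finite-projective-dimension theorem as the base case; the reducing sequence $0\to M\to K_\eta\to\Omega^n_R(M)\to 0$ transports the bound $\depth_R\Tor_q^R(M,N)\le 1$ to $K_\eta$, and the built-in equality $\depth_R K_\eta=\depth_R M$ closes the step. G-dimension never enters. (A related issue in your inductive step: $\depth_R\Omega_R(M)=\depth_R(M)+1$ fails once $\depth_R M=\depth R$, and nothing in (ii) excludes that.)

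For (i), your ``routine depth-lemma chase'' from $\Omega^n_R(M)$ back to $M$ yields only $\depth_R(M\otimes_R N)\ge\depth_R N-\gd_R(M)$; the syzygy sequences give no upper bound. The paper secures the reverse inequality by nesting the induction on $\gd_R(M)$ \emph{inside} the induction on $\cx_R(M)$: the outer hypothesis supplies the depth formula for $K_\eta$, the inner hypothesis handles $\Omega^n_R(M)$ (whose G-dimension has dropped when $n>0$), and the tensored sequence $0\to M\otimes_RN\to K_\eta\otimes_RN\to\Omega^n_R(M)\otimes_RN\to 0$ then forces $\depth_R(M\otimes_RN)=\depth_R(K_\eta\otimes_RN)$. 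In the base $\gd_R(M)=0$ the paper bypasses your four-term detour through $\Hom_R(\Tr M,N)$ and the infinite resolution of $\Tr M$ altogether: once Lemma~\ref{p3} gives $\Ext^{>0}_R(\Tr M,N)=0$, the sequence~(\ref{n1}) gives $\Ext^{>0}_R(M^*,N)=0$, and \cite[Lemma~4.1]{AY} then yields $\depth_R\Hom_R(M^*,N)=\depth_R N$ directly. Your proposed secondary complexity induction here stalls for the same reason as the syzygy chase: when $|\eta|>1$ the factor $\Omega^{|\eta|-1}_R(M)$ has the same complexity as $M$ and, with $\gd=0$, the same target depth as $K_\eta\otimes_RN$, so the depth lemma again produces only an inequality.
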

\begin{proof}
(i) We argue by induction on $c=\cx_R(M)$. If $c=0$ then $\pd_R(M)<\infty$ and the formula holds by Auslander's original result, so suppose that $c>0$ and that $\eta\in\Ext^*_R(M,M)$ reduces the complexity of $M$.
The exact sequence $0\rightarrow M\rightarrow K_{\eta}\rightarrow\Omega^n_R(M)\rightarrow0$, induces a long exact sequence
\begin{equation}\label{19}
\cdots\rightarrow\Tor_i^R(M,N)\rightarrow\Tor_i^R(K_{\eta},N)\rightarrow\Tor_{i+n}^R(M,N)\rightarrow\cdots
\end{equation}
of homology modules. Therefore $\Tor_i^R(K_{\eta},N)=0$ for all $i>0$. As $\cx_R(K_{\eta})<c$ and $\gd_R(K_{\eta})<\infty$,
\begin{equation}\label{20}
\depth_R(K_{\eta})+\depth_R(N)=\depth_R(K_{\eta}\otimes_RN)+\depth R
\end{equation}
by induction hypothesis.
Now by induction on $\gd_R(M)$, we show that the formula holds. If $\gd_R(M)=0$ then by the Lemma \ref{p3}, $\Ext^i_R(\Tr M,N)=0$ for all $i>0$.
Hence by Theorem \ref{a1}, $M\otimes_RN\cong\Hom_R(M^*,N)$ and also by the exact sequence (\ref{n1}), $\Ext^i_R(M^*,N)=0$ for all $i>0$.
Therefore by \cite[Lemma 4.1]{AY}, $\depth_R(M\otimes_RN)=\depth_R(\Hom_R(M^*,N))=\depth_R(N)$ and so by the Auslander-Bridger formula,
$\depth_R(M)+\depth_R(N)=\depth_R(M\otimes N)+\depth R$.

Now let $\gd_R(M)>0$, if $n=0$ then we obtain the following exact sequence
\begin{equation}\label{21}
0\rightarrow M\otimes_RN\rightarrow K_{\eta}\otimes_RN\rightarrow M\otimes_RN\rightarrow0.
\end{equation}
As $\gd_R(M)>0$, $\gd_R(\Omega_R(M))=\gd_R(M)-1$. Note that by the proof of \cite[Proposition 2.2(ii)]{B2}, $\Omega_R(M)$ has also reducible complexity. Therefore,
\begin{equation}\label{22}
\depth_R(\Omega_R(M))+\depth_R(N)=\depth_R(\Omega_R(M)\otimes N)+\depth R
\end{equation}
by induction hypothesis.
From the exact sequence $0\rightarrow\Omega_R(M)\rightarrow F\rightarrow M\rightarrow0$, where $F$ is a free module, we obtain the exact sequence
$0\rightarrow\Omega_R(M)\otimes_RN\rightarrow F\otimes_RN\rightarrow M\otimes_RN\rightarrow0$. Therefore, $\depth_R(M\otimes_RN)\geq\min\{\depth_R(N),\depth_R(\Omega_R(M)\otimes_R N)-1\}$, by the depth Lemma. Now by (\ref{22}), $\depth_R(\Omega_R( M)\otimes_RN)-1=\depth_R(N)-\gd_R(M)<\depth_R(N)$ and so $\depth_R(M\otimes_RN)\geq\depth_R(N)-\gd_R(M)$. On the other hand, if $\depth_R(M\otimes_RN)>\depth_R(N)-\gd_R(M)$ then by the exact sequence (\ref{21}), it is obvious that $\depth_R(K_{\eta}\otimes_RN)>\depth_R(N)-\gd_R(M)=\depth_R(N)-\gd_R(K_{\eta})$, which is a contradiction by (\ref{20}). Hence
$\depth_R(M)+\depth_R(N)=\depth_R(M\otimes N)+\depth R$.

Now let $n>0$, then $\gd_R(\Omega^n_R(M))=\max\{0,\gd_R(M)-n\}<\gd_R(M)$. Note that by the proof of the \cite[Proposition 2.2(ii)]{B2}, $\Omega^n_R(M)$ has also reducible complexity and so by induction hypothesis, $\depth_R(\Omega^n_R(M)\otimes_RN)=\depth_R(N)-\gd_R(\Omega^n_R(M))$. Therefore, as $\gd_R(M)=\gd_R(K_{\eta})$, $\depth_R(K_{\eta}\otimes_RN)<\depth_R(\Omega^n_R(M)\otimes_RN)$ by (\ref{20}) and so from the exact sequence $0\rightarrow M\otimes_RN\rightarrow K_{\eta}\otimes_RN\rightarrow \Omega^n_R(M)\otimes_RN\rightarrow0$, it is obvious that $\depth_R(M\otimes_RN)=\depth_R(K_{\eta}\otimes_RN)$.
Therefore by (\ref{20}), $\depth_R(M)+\depth_R(N)=\depth_R(M\otimes N)+\depth R$.

(ii) We argue by induction on $\cx_R(M)=c$. If $c=0$ then $\pd_R(M)<\infty$ and the formula holds by Auslander's original result, so suppose that $c>0$ and that $\eta\in\Ext^*_R(M,M)$ reduces the complexity of $M$. The exact sequence $0\rightarrow M\rightarrow K_{\eta}\rightarrow\Omega^n_R(M)\rightarrow0$, induces a long exact sequence
\begin{equation}\label{23}
\cdots\rightarrow\Tor_q^R(M,N)\overset{f}\rightarrow\Tor_q^R(K_{\eta},N)\rightarrow\Tor_{q+n}^R(M,N)\rightarrow\cdots
\end{equation}
of homology modules.
From (\ref{23}), it is obvious that $q=\sup\{i\mid\Tor_i^R(K_{\eta},N)\neq0\}$. If $n=0$ then from (\ref{23}), we obtain the following exact sequences
\begin{equation}\label{24}
0\rightarrow\Tor_q^R(M,N)\rightarrow\Tor_q^R(K_{\eta},N)\rightarrow\coker(f)\rightarrow0,
\end{equation}
\begin{equation}\label{25}
0\rightarrow\coker(f)\rightarrow\Tor_q^R(M,N).
\end{equation}
If $\depth_R(\Tor_q^R(M,N))=0$, then by the exact sequence (\ref{24}), $\depth_R(\Tor_q^R(K_{\eta},N))=0$. As $\cx_R(K_{\eta})<c$ , by induction hypothesis, $\depth_R(N)+\depth_R(K_{\eta})=\depth R-q$ and since $\depth_R(K_{\eta})=\depth_R(M)$, we are done. If $\depth_R(\Tor_q^R(M,N))=1$ then by the exact sequence (\ref{25}), $\depth_R(\coker(f))>0$ and so by the exact sequence (\ref{24}), $\depth_R(\Tor_q^R(K_{\eta},N))=1$. Therefore by induction hypothesis, $\depth R+1-q=\depth_R(K_{\eta})+\depth_R(N)=\depth_R(M)+\depth_R(N)$.

Now suppose that $n>0$, then from the exact sequence (\ref{23}), it is obvious that $\Tor_q^R(M,N)\cong\Tor_q^R(K_{\eta},N)$ and so by induction hypothesis, we are done.
\end{proof}
The following lemma is useful for the rest of the paper.
\begin{lem}\label{l1}
\emph{
For an $R$--module $M$, $\cid_R(M)=0$ if and only if $\cid_R(\Tr M)=0$.}
\end{lem}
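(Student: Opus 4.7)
The plan is to prove the nontrivial implication $\cid_R(M)=0 \Rightarrow \cid_R(\Tr M)=0$; the converse then follows by applying this direction to $\Tr M$ and using that $\Tr\Tr M$ is stably isomorphic to $M$, together with the invariance of $\cid$ under free summands. Since $\cid_R(M)=0$ forces $\gd_R(M)=0$ by \cite[Theorem 1.4]{AGP}, and since $\Tr$ preserves $G$-dimension zero (the defining vanishing conditions are exchanged under $\Tr$ via $\Tr\Tr M\cong M$ modulo projectives), one automatically obtains $\gd_R(\Tr M)=0$. Hence it suffices to show $\cid_R(\Tr M)<\infty$, as then \cite[Theorem 1.4]{AGP} pins the value down to $0$.

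To that end, fix a quasi-deformation $R\to A\twoheadleftarrow Q$ witnessing $\cid_R(M)=0$, so $\pd_Q(M\otimes_R A)=\pd_Q(A)=:n$ with $A=Q/(\mathbf{x})$ for a $Q$-regular sequence $\mathbf{x}$. Set $N:=M\otimes_R A$. Since $R\to A$ is flat and $M$ is finite, the natural identity $\Hom_R(-,A)\cong \Hom_A(-\otimes_R A,A)$ applied to a minimal presentation $P_1\to P_0\to M\to 0$ yields $\Tr M\otimes_R A\cong \Tr_A N$ up to free summands. So it is enough to prove $\pd_Q(\Tr_A N)<\infty$. From the four-term exact sequence
\[
0\to N^*\to P_0^*\to P_1^*\to \Tr_A N\to 0,
\]
with $P_0,P_1$ now denoting $A$-free and $N^*=\Hom_A(N,A)$, exactness on the left uses $\Ext^1_A(N,A)=0$, which follows from $\gd_A(N)=0$ (flat base change of $\gd_R(M)=0$). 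Each $P_i^*$ is $A$-free, hence has $\pd_Q=n$, so splicing this sequence reduces the problem to showing $\pd_Q(N^*)<\infty$.

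The crux is the identification $N^*\cong \Ext^n_Q(N,Q)$. The Koszul resolution of $A$ over $Q$ is self-dual, so $\Ext^q_Q(A,Q)=0$ for $q\ne n$ and is isomorphic to $A$ for $q=n$. Combining with $\Ext^{>0}_A(N,A)=0$, the Cartan-Eilenberg change-of-rings spectral sequence $\Ext^p_A(N,\Ext^q_Q(A,Q))\Rightarrow \Ext^{p+q}_Q(N,Q)$ collapses to $\Ext^n_Q(N,Q)\cong N^*$ with all other $\Ext^i_Q(N,Q)$ vanishing. Equivalently, dualizing a $Q$-free resolution $0\to F_n\to\cdots\to F_0\to N\to 0$ produces an exact complex $0\to F_0^*\to\cdots\to F_n^*\to N^*\to 0$, i.e.\ a finite $Q$-free resolution of $N^*$ of length $n$. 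Hence $\pd_Q(N^*)\le n<\infty$, completing the proof.

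The main obstacle I anticipate is the clean identification of $\Hom_A(N,A)$ with $\Ext^n_Q(N,Q)$ as a $Q$-module carrying finite projective dimension---this rests on the self-duality of the Koszul complex (i.e.\ on the Gorenstein-ness of $A$ over $Q$) together with $\gd_A(N)=0$ to kill all other $\Ext$'s; the remaining ingredients (flat base change for $\Tr$ and $\gd$, and the splicing argument) are routine.
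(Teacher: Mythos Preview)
Your argument is correct and shares the paper's overall strategy: show $\cid_R(\Tr M)<\infty$, then use $\gd_R(\Tr M)=0$ together with \cite[Theorem 1.4]{AGP} to force the value to zero, with the converse via $\underline{\Tr\Tr M}\cong\underline{M}$. The difference lies in how finiteness is obtained. The paper simply cites \cite[Lemma 3.5]{BJ1} to get $\cid_R(M^*)=0$ and then reads $\cid_R(\Tr M)<\infty$ off the four-term sequence $0\to M^*\to P_0^*\to P_1^*\to\Tr M\to 0$ over $R$. You instead fix a witnessing quasi-deformation, base change everything to $A$, and prove $\pd_Q(\Hom_A(N,A))<\infty$ directly via the change-of-rings spectral sequence and Koszul self-duality---in effect giving an independent proof of the cited Bergh--Jorgensen lemma in the situation at hand. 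Your route is more self-contained; the paper's is much shorter by outsourcing the key step.

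Two minor remarks. First, the four-term sequence $0\to N^*\to P_0^*\to P_1^*\to\Tr_A N\to 0$ is always exact by left-exactness of $\Hom$ and the definition of $\Tr$; the hypothesis $\Ext^1_A(N,A)=0$ is not needed there (it is needed, and used correctly, only for the spectral-sequence collapse). Second, with matching presentations one actually has $\Tr M\otimes_R A\cong\Tr_A N$ on the nose, so the ``up to free summands'' hedge is unnecessary.
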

\begin{proof}
If $\cid_R(M)=0$ then $\cid_R(M^*)=0$ by \cite[Lemma 3.5]{BJ1} and so from the exact sequence (\ref{n1}), $\cid_R(\Tr M)<\infty$.
As $\gd_R(M)=0$, $\gd_R(\Tr M)=0$ by \cite[Lemma 4.1]{AB} and so $\cid_R(\Tr M)=0$ by \cite[Theorem 1.4]{AGP}.
As $\underline{M}\cong\underline{\Tr\Tr M}$, the other side is obvious.

\end{proof}
Let $M$ and $N$ be $R$--modules. In the following, we investigate the connection between complete intersection dimension of $M$ and the vanishing of cohomology modules, $\Ext^{i>0}_R(\Tr M,N)$, and the vanishing of homology modules, $\Tor_{i>0}^R(M,N)$.
\begin{prop}\emph{
Let $M$ and $N$ be $R$--modules such that $\cid_R(M)<\infty$. If two of the following conditions hold true then the third one is also true.}
\begin{itemize}
       \item[(i)]\emph{$\Tor_i^R(M,N)=0$ for all $i>0$},
       \item[(ii)]\emph{$\Ext^i_R(\Tr M,N)=0$ for all $i>0$},
       \item[(iii)]\emph{$\cid_R(M)=0$}.
\end{itemize}
\end{prop}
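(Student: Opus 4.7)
The plan is to establish each of the three implications (i)$\wedge$(iii)$\Rightarrow$(ii), (ii)$\wedge$(iii)$\Rightarrow$(i), and (i)$\wedge$(ii)$\Rightarrow$(iii) separately. Throughout I will use that $\cid_R(M)<\infty$ forces $\gd_R(M)=\cid_R(M)$ by \cite[Theorem~1.4]{AGP}, so that (iii) is equivalent to $\gd_R(M)=0$, i.e.\ to $\Ext^i_R(M,R)=0$ for all $i>0$; and that $M$ has reducible complexity by \cite[Proposition~2.2(i)]{B2}.

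The implication (i)$\wedge$(iii)$\Rightarrow$(ii) is immediate from Lemma~\ref{p3}: $M$ has reducible complexity, (iii) yields $\gd_R(M)=0$, and (i) supplies the required Tor-vanishing, so the lemma gives (ii).

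For (ii)$\wedge$(iii)$\Rightarrow$(i) I would mirror the inductive scheme of the proof of Lemma~\ref{p3} in the Tor direction, inducting on $c:=\cx_R(M)$. The base case $c=0$ is trivial since $\pd_R(M)<\infty$ together with $\gd_R(M)=0$ forces $M$ projective. For $c>0$, pick $\eta$ reducing the complexity of $M$ and apply \cite[Lemma~3.9]{AB} to the pushout sequence $0\to M\to K_\eta\to\Omega^q_R M\to 0$ (using $\gd_R(M)=0$) to produce $0\to \mathcal{T}_{q+1}M\to \Tr K_\eta\to \Tr M\to 0$. Taking $\Hom_R(-,N)$ and combining hypothesis (ii) with the iterated stable isomorphism $\underline{\mathcal{T}_{q+1}M}\cong \underline{\Omega^q_R\Tr M}$ (valid since all $\Ext^i_R(M,R)=0$) yields $\Ext^i_R(\Tr K_\eta,N)=0$ for $i>0$. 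The inductive hypothesis applied to $K_\eta$, which has smaller complexity and inherits $\gd_R=0$ from the pushout sequence, then gives $\Tor_i^R(K_\eta,N)=0$ for $i>0$; feeding this into the Tor long exact sequence of the pushout produces the periodicity $\Tor_i^R(M,N)\cong \Tor_{i+q+1}^R(M,N)$ for $i\geq 1$, which, combined with vanishing at a sufficiently high syzygy obtained by iterating the argument, collapses to (i).

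The hard part will be (i)$\wedge$(ii)$\Rightarrow$(iii), which I would attack by contradiction. Suppose $g:=\gd_R(M)>0$. Then $\Omega^g_R M$ has $\gd_R=0$ and reducible complexity, and by (i) satisfies $\Tor_i^R(\Omega^g_R M,N)=\Tor_{i+g}^R(M,N)=0$ for $i>0$, so Lemma~\ref{p3} gives $\Ext^i_R(\mathcal{T}_{g+1}M,N)=0$ for $i>0$. Plugging $n=g$ into the first sequence of Theorem~\ref{a1} and using (i) forces $\Hom_R(\Ext^g_R(M,R),N)=0$. The main obstacle is then to promote this to $\Ext^g_R(M,R)=0$ itself: here hypothesis (ii) enters through the four-term exact sequence $0\to M^*\to F_0^*\to F_1^*\to \Tr M\to 0$, which yields $\Ext^i_R(\Tr M,N)\cong \Ext^{i-2}_R(M^*,N)$ for $i\geq 3$ and hence $\Ext^j_R(M^*,N)=0$ for $j>0$; iterating the foregoing analysis with $M^*$ in place of $M$ (using \cite[Lemma~3.5]{BJ1} to inherit finiteness and vanishing of $\cid_R(M^*)$) should refine the vanishing enough to derive the desired contradiction.
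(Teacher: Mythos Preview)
Your implication (i)$\wedge$(iii)$\Rightarrow$(ii) matches the paper exactly. The other two implications, however, have genuine gaps.

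\medskip

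\textbf{On (ii)$\wedge$(iii)$\Rightarrow$(i).} Your induction delivers only the periodicity $\Tor_i^R(M,N)\cong\Tor_{i+q+1}^R(M,N)$, and ``iterating the argument'' cannot break it: passing to $\eta^2$ gives period $2(q+1)$, a multiple of $q+1$, so no new information. In Lemma~\ref{p3} the corresponding periodicity was combined with the \emph{seed vanishing} $\Ext^i_R(\mathcal T_tM,N)=0$ for $1\le i<t$, which came directly from the Tor hypothesis via Theorem~\ref{a1}. Here the hypothesis is $\Ext^i_R(\Tr M,N)=0$, and this yields only $\Ext^j_R(\mathcal T_tM,N)=0$ for $j\ge t$, not the low-degree Ext vanishing that would feed into $\Tor_n^R(M,N)\cong\Ext^1_R(\mathcal T_{n+1}M,N)$. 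So you have no seed and the induction stalls. The paper avoids this by \emph{swapping sides}: set $K=\Tr M$, use Lemma~\ref{l1} to get $\cid_R(K)=0$, and apply Theorem~\ref{a1} to $K$ (not to $M$). From $\Ext^i_R(K,N)=0$ one gets $\Tor_1^R(\mathcal T_{i+1}K,N)=0$; chaining via $\underline{\mathcal T_iK}\cong\underline{\Omega\mathcal T_{i+1}K}$ produces $c+1$ consecutive Tor vanishings for $\mathcal T_{c+2}K$, and then Jorgensen's rigidity \cite[Corollary~2.6]{J1} forces $\Tor_i^R(\mathcal T_{c+2}K,N)=0$ for all $i>0$, hence $\Tor_i^R(M,N)=0$.

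\medskip

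\textbf{On (i)$\wedge$(ii)$\Rightarrow$(iii).} Your argument correctly reaches $\Hom_R(\Ext^g_R(M,R),N)=0$, but there is no way to promote this to $\Ext^g_R(M,R)=0$ in general: over a local ring with $\depth R>0$ one has $\Hom_R(k,R)=0$ while $k\ne0$, and nothing in the hypotheses rules out $N$ behaving similarly. The subsequent move to $M^*$ does not help, since you only know $\cid_R(M^*)<\infty$, not $\cid_R(M^*)=0$, and the ``iteration'' you sketch has no evident termination. The paper's argument is entirely different and uses depth: by \cite[Theorem~2.5]{AY} (with (i)) one has $\depth_R(M\otimes_RN)=\depth_RN-\cid_R(M)$; on the other hand (ii) and Theorem~\ref{a1} give $M\otimes_RN\cong\Hom_R(M^*,N)$ and $\Ext^{>0}_R(M^*,N)=0$, whence $\depth_R(M\otimes_RN)=\depth_RN$ by \cite[Lemma~4.1]{AY}. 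Comparing the two equalities forces $\cid_R(M)=0$.
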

\begin{proof}
(i),(ii)$\Rightarrow$(iii) By \cite[Theorem 2.5]{AY} and \cite[Theorem 1.4]{AGP}, $\depth_R(M\otimes_RN)=\depth_R(N)-\cid_R(M)$. As $\Ext^i_R(\Tr M,N)=0$ for all $i>0$, $M\otimes_RN\cong\Hom_R(M^*,N)$ by Theorem \ref{a1}
and also $\Ext^i_R(M^*,N)=0$ for all $i>0$, by the exact sequence (\ref{n1}). Therefore, $\depth_R(\Hom_R(M^*,N))=\depth_R(N)$ by \cite[Lemma 4.1]{AY}. Hence $\cid_R(M)=0$.

(ii),(iii)$\Rightarrow$(i) Set $K=\Tr M$ and $c=\cx_R(M)$. By Lemma \ref{l1}, $\cid_R(K)=0$. By Theorem \ref{a1}, $\Tor_1^R(\mathcal{T}_{i+1}K,N)=0$ for all $i>0$. As $\Ext^i_R(K,R)=0$ for all $i>0$, $\underline{\mathcal{T}_iK}\cong\underline{\Omega\mathcal{T}_{i+1}K}$ for all $i>0$ and so
$\Tor_i^R(\mathcal{T}_jK,N)=0$ for all $j>1$ and $1\leq i<j$. As $\underline{M}\cong\underline{\Tr K}$, $\cx_R(\Tr K)=c$ and so $\cx_R(\mathcal{T}_iK)=c$ for all $i>0$. Since $\Tor_i^R(\mathcal{T}_{c+2}K,N)=0$ for $1\leq i\leq c+1$ and $\cid_R(\mathcal{T}_{c+2}K)=0$, then $\Tor_i^R(\mathcal{T}_{c+2}K,N)=0$ for all $i>0$, by\cite[Corollary 2,6]{J1}. Therefore, $\Tor_i^R(\mathcal{T}_1K,N)=0$ for all $i>0$. As $\underline{M}\cong\underline{\Tr K}$, $\Tor^i_R(M,N)=0$ for all $i>0$.

(i), (iii)$\Rightarrow$(ii) By \cite[Theorem 1.4]{AGP}$, \gd_R(M)=\cid_R(M)=0$ and by \cite[Proposition 2.2(i)]{B2}, $M$ has reducible complexity. Therefore, $\Ext^i_R(\Tr M,N)=0$ for all $i>0$ by Lemma \ref{p3}.
\end{proof}
\section{ Vanishing results}

Let $M$ and $N$ be $R$--modules. In \cite{J1}, Jorgensen proved that the vanishing of Ext for a certain
sequence of numbers forces the vanishing of all the higher Ext groups. More precisely, he proved that if $\cid_R(M)<\infty$ and
$\Ext^i_R(M,N)=\Ext^{i+1}_R(M,N)=\cdots=\Ext^{i+c}_R(M,N)=0$, where $c=\cx_R(M)$, then $\Ext^j_R(M,N)=0$ for all $j>\cid_R(M)$
\cite[Corollary 2.6]{J1}. In \cite{B1}, Bergh assumed the vanishing of nonconsecutive Ext groups and generalized this result.
For an $R$--module $M$ of finite complete intersection dimension, he proved that if there exist an odd number $q\geq1$, and a number $n>\cid_R(M)$ such that $\Ext^i_R(M,N)=0$ for $i\in\{n,n+q,\cdots,n+cq\}$, then $\Ext^i_R(M,N)=0$ for all $i>\cid_R(M)$, \cite[Theorem 3.1]{B1}. In this section, we are going to prove similar results, when $\wgd_R(M)<\infty$ and $N$ has finite complete intersection dimension.

\begin{prop}\label{p1}
\emph{
Let $M$ and $N$ be $R$--modules and let $\cid_R(N)<\infty$  and $\wgd_R(M)<\infty$. Set $\cx_R(N)=c$. If $\Ext^i_R(M,N)=\Ext^{i+1}_R(M,N)=\ldots=\Ext^{i+c}_R(M,N)=0$, for some
$i>\wgd_R(M)$, then $\Ext^j_R(M,N)=0$ for all $j>\wgd_R(M)$.}
\end{prop}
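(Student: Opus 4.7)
The plan is to translate the Ext hypothesis into a statement about $\Tor(\text{--},N)$ on a single module via the Auslander--Bridger bridge of Theorem~\ref{a1}, and then to invoke Jorgensen's Tor rigidity \cite[Corollary~2.6]{J1} applied to $N$, which carries the hypothesis $\cid_R(N)<\infty$. The idea is that our finite complete intersection dimension assumption lies on $N$, not on $M$, so the bridge must move the vanishing problem onto $N$ in order for Jorgensen's rigidity to apply.

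Set $t:=\wgd_R(M)$ and $d:=\cid_R(N)$. Because $\Ext^s_R(M,R)=0$ for every $s>t$, Theorem~\ref{a1} degenerates to $\Ext^s_R(M,N)\cong\Tor_1^R(\mathcal{T}_{s+1}M,N)$, and the stable iso $\mathcal{T}_sM\cong\Omega\mathcal{T}_{s+1}M$ (available in the same range) iterates to
\[
\Ext^j_R(M,N)\;\cong\;\Tor_k^R(\mathcal{T}_{j+k}M,N)\qquad (j>t,\ k\ge 1).
\]
I would then choose $L:=\mathcal{T}_{i+c+d+1}M$; running this bridge over the hypothesis (with $j=i+\ell$, $k=c+d+1-\ell$ for $0\le\ell\le c$) yields $c+1$ consecutive Tor vanishings
\[
\Tor_k^R(L,N)=0,\qquad k=d+1,\,d+2,\,\ldots,\,d+c+1,
\]
all lying above degree $d=\cid_R(N)$. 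Since $\cx_R(N)=c$, \cite[Corollary~2.6]{J1} applied to $N$ promotes this to $\Tor_k^R(L,N)=0$ for every $k>d$.

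To conclude, fix any $j>t$. When $t<j\le i+c$, the bridge gives $\Ext^j_R(M,N)\cong \Tor_{i+c+d+1-j}^R(L,N)$ with index strictly larger than $d$, so this Ext vanishes. When $j>i+c$, set $L':=\mathcal{T}_{j+c+d+1}M$; iterating $\mathcal{T}_sM\cong\Omega\mathcal{T}_{s+1}M$ over $s=i+c+d+1,\ldots,j+c+d$ yields $L\cong\Omega^{j-i}L'$ stably, hence $\Tor_m^R(L',N)=0$ for every $m\ge d+j-i+1>d$. This furnishes a fresh block of $c+1$ consecutive Tor vanishings above $d$, and a second application of \cite[Corollary~2.6]{J1} produces $\Tor_k^R(L',N)=0$ for all $k>d$, whence in particular $\Tor_{c+d+1}^R(L',N)\cong\Ext^j_R(M,N)=0$.

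The principal obstacle is the bookkeeping required to ensure that every block of $c+1$ consecutive Tor vanishings fed into Jorgensen's rigidity genuinely lies above $\cid_R(N)=d$, not merely above $1$. This is what forces the shift by $d$ in the definitions of $L$ and $L'$ (rather than the naive $\mathcal{T}_{i+c+1}M$ and $\mathcal{T}_{j+c+1}M$), and it is why a \emph{second} invocation of Jorgensen's result is needed in order to reach the range $j>i+c$, which the bridge via $L$ alone does not cover.
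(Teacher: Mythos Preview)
Your argument is correct and takes a genuinely different route from the paper's. The paper proves the proposition by direct induction on $c=\cx_R(N)$: in the base case $c=0$ it uses the same Auslander--Bridger bridge you use, but in the inductive step it passes to a quasi-deformation via \cite[Lemma~2.1(i)]{B1}, obtains a short exact sequence $0\to\hat{N}\to K_\eta\to\Omega_A\hat{N}\to 0$ with $\cx_A(K_\eta)<c$, and then combines the long exact sequence in $\Ext_A(\hat{M},-)$ with the isomorphism $\Ext^{j}_A(\hat{M},\Omega_A\hat{N})\cong\Ext^{j-1}_A(\hat{M},\hat{N})$ (valid above $\wgd$) to produce $c$ consecutive vanishings of $\Ext_A(\hat{M},K_\eta)$, apply induction, and finish via a two-periodicity $\Ext^{j-1}_A(\hat{M},\hat{N})\cong\Ext^{j+1}_A(\hat{M},\hat{N})$. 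By contrast, you translate the entire problem to $\Tor(-,N)$ via Theorem~\ref{a1} and the stable isomorphisms $\mathcal{T}_sM\cong\Omega\mathcal{T}_{s+1}M$, and then invoke the Tor form of Jorgensen's rigidity \cite[Corollary~2.6]{J1} on the module $N$ (which carries the finite CI-dimension). This packages the complexity-reduction step inside Jorgensen's black box rather than redoing it by hand. What your approach buys is economy: once the bridge is set up, the conclusion is essentially immediate from existing literature, and your careful shift by $d=\cid_R(N)$ and the second application for the range $j>i+c$ are exactly what is needed to keep every invocation of \cite{J1} valid. What the paper's approach buys is self-containedness and a template (quasi-deformation plus reducing element) that it reuses verbatim in the proof of Theorem~\ref{t1}, where the gaps between the vanishing Ext groups are $q>1$ and the bridge-to-Jorgensen trick would need further adaptation.
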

\begin{proof}
We argue by induction on $c$. If $c=0$ then $\pd_R(N)<\infty$.
As $\Ext^i_R(M,R)=0$ for all $i>\wgd_R(M)$, $\underline{\mathcal{T}_iM}\cong\underline{\Omega\mathcal{T}_{i+1}M}$ for all $i>\wgd_R(M)$ and also
by Theorem \ref{a1}, $\Ext^j_R(M,N)\cong\Tor_1^R(\mathcal{T}_{j+1}M,N)$ for all $j>\wgd_R(M)$. Set $t=\pd_R(N)$. Therefore
$\Ext^j_R(M,N)\cong\Tor_{t+1}^R(\mathcal{T}_{t+j+1}M,N)=0$ for all $j>\wgd_R(M)$.

Now suppose $c$ is positive and set $n=\wgd_R(M)$. As $\cid_R(N)<\infty$, by \cite[Lemma 2.1(i)]{B1}, there exists a quasi deformation $R\rightarrow A\twoheadleftarrow Q$ such that the $A$--module $A\otimes_RN$ has reducible complexity by an element $\eta\in\Ext^2_A(A\otimes_RN,A\otimes_RN)$. Set $\hat{N}=A\otimes_RN$ and $\hat{M}=A\otimes_RM$. Note that $\wgd_A(\hat{M})=n$ and by the proof of \cite[Lemma 2.1(i)]{B1}, $\cid_A(\hat{N})<\infty$. The exact sequence $$0\rightarrow\hat{N}\rightarrow K_{\eta}\rightarrow \Omega_{A}\hat{N}\rightarrow0$$ induces a long exact sequence
\begin{equation}\label{1}
\cdots\rightarrow\Ext^j_A(\hat{M},\hat{N})
\rightarrow\Ext^j_A(\hat{M},K_{\eta})\rightarrow
\Ext^j_A(\hat{M},\Omega_A(\hat{N})){\rightarrow}\cdots
\end{equation}
of cohomology modules.
Now consider the exact sequence $0\rightarrow\Omega_A(\hat{N})\rightarrow F\rightarrow\hat{N}\rightarrow0$, where $F$ is a free $A$--module.
As $\Ext^k_A(\hat{M},A)=0$ for all $k>n$, we get the following isomorphism
\begin{equation}\label{2}
\Ext^{j-1}_A(\hat{M},\hat{N})\cong\Ext^j_A(\hat{M},\Omega_A(\hat{N}))
\end{equation}
for all $j>n+1$. Now from (\ref{1}) and (\ref{2}), it is obvious that $\Ext^j_A(\hat{M},K_{\eta})=0$ for $i+1\leq j\leq i+c$.
As $\cx_A(K_{\eta})< \cx_A(\hat{N})=c$, by induction hypothesis we have $\Ext^j_A(\hat{M},K_{\eta})=0$ for all $j>n$.
Therefore from (\ref{1}) and (\ref{2}), we get $$\Ext^{j-1}_A(\hat{M},\hat{N})\cong\Ext^j_A(\hat{M},\Omega_A(\hat{N}))\cong\Ext^{j+1}_A(\hat{M},\hat{N})$$
for all $j>n+1$. Now since $c>0$, it is obvious that $\Ext^j_A(\hat{M},\hat{N})=0$ for all $j>n$. Therefore $\Ext^j_R(M,N)=0$ for all $j>n$.
\end{proof}
Now we can generalize Proposition \ref{p1} as follows.
\begin{thm}\label{t1}
\emph{
Let $M$ and $N$ be $R$--modules and let $\cid_R(N)<\infty$ and $\wgd_R(M)<\infty$. Set $\cx_R(N)=c$. If there exist an odd number $q\geq1$, and $i>\wgd_R(M)$ such that $\Ext^j_R(M,N)=0$, for $j\in\{i,i+q,\cdots,i+cq\}$, then
$\Ext^j_R(M,N)=0$ for all $j>\wgd_R(M)$.}
\end{thm}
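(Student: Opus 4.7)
The plan is to prove Theorem \ref{t1} by induction on $c = \cx_R(N)$, following the template of Proposition \ref{p1} but replacing the complexity-reducing element $\eta \in \Ext^2_A(\hat{N},\hat{N})$ with its power $\eta^k$ where $k = (q+1)/2$. Since $q$ is odd, $k$ is a positive integer, and $\eta^k \in \Ext^{q+1}_A(\hat{N},\hat{N})$ produces a degree shift of $q+1$ that exactly matches the gap in the hypothesized arithmetic progression of vanishings. The base case $c = 0$ is identical to that of Proposition \ref{p1}: $\pd_R(N) < \infty$ together with $\wgd_R(M) < \infty$ forces $\Ext^j_R(M,N) = 0$ for every $j > \wgd_R(M)$ without using any vanishing hypothesis.

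For the inductive step $c \geq 1$, I would pass via \cite[Lemma 2.1(i)]{B1} to a quasi-deformation $R \to A \twoheadleftarrow Q$ carrying $\eta \in \Ext^2_A(\hat{N},\hat{N})$ that reduces the complexity of $\hat{N} = A\otimes_R N$. Iterating \cite[Lemma 2.3]{B2} shows that $\cx_A(K_{\eta^k}) \leq \cx_A(K_\eta) = c - 1$ and that $\cid_A(K_{\eta^k})$ remains finite, while by construction there is a short exact sequence
\[0 \to \hat{N} \to K_{\eta^k} \to \Omega^q_A(\hat{N}) \to 0.\]
Writing $E^j = \Ext^j_A(\hat{M},\hat{N})$ and $K^j = \Ext^j_A(\hat{M},K_{\eta^k})$, and using $\wgd_A(\hat{M}) = n$ to get the shift isomorphism $\Ext^j_A(\hat{M},\Omega^q_A(\hat{N})) \cong E^{j-q}$ for $j > n+q$, the induced long exact sequence collapses to
\[E^{j-q-1} \xrightarrow{\eta^k} E^j \to K^j \to E^{j-q} \xrightarrow{\eta^k} E^{j+1}.\]

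For each $j \in \{i+q,\,i+2q,\,\ldots,\,i+cq\}$, both $j$ and $j-q$ lie in the hypothesized vanishing set, so $E^j = E^{j-q} = 0$ and therefore $K^j = 0$. This yields $c$ vanishings of $K^j$ in arithmetic progression with the odd common difference $q$, and applying the inductive hypothesis to $(\hat{M},K_{\eta^k})$ — whose complexity is strictly less than $c$ — gives $K^j = 0$ for all $j > n$. Feeding this back into the long exact sequence makes $\eta^k$ an isomorphism $E^m \xrightarrow{\cong} E^{m+q+1}$ for $m > n$. From $\eta^k$-injectivity one immediately deduces injectivity of $\eta : E^m \to E^{m+2}$ for every $m > n$; writing $y = \eta^k z = \eta(\eta^{k-1}z)$ with $z$ produced by $\eta^k$-surjectivity upgrades this to $\eta$-surjectivity, and hence to an $\eta$-isomorphism, once $m$ is large enough. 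Since $q$ is odd, the two vanishings $E^i = 0$ and $E^{i+q} = 0$ sit in opposite parity classes mod $2$; forward propagation through the $\eta$-isomorphism covers both parities in high degrees, and backward propagation by $\eta$-injectivity forces $E^j = 0$ for every $j > n$. Faithful flatness of $A/R$ then yields $\Ext^j_R(M,N) = 0$ for all $j > \wgd_R(M)$.

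The main obstacle is identifying the correct long exact sequence. The LES attached to $\eta$ itself (degree $2$, hence cohomology shift $1$) couples $E^j$ with $E^{j-1}$ and does not interact well with an arithmetic progression of odd gap $q > 1$; passing to $\eta^k$ with $k=(q+1)/2$ corrects this by matching the shift to the hypothesis so that two consecutive AP entries kill $K^j$ simultaneously. A secondary bookkeeping point is that the $\eta^k$-isomorphism produces an $\eta$-isomorphism only above a threshold of order $n+q$, and one has $\eta$-injectivity in the intermediate range $n < m < n+q$; this is precisely where the odd parity of $q$ earns its keep, because the two hypothesized vanishings at $i$ and $i+q$ lie in opposite parity classes and, combined with $\eta$-injectivity, exhaust all residues above $n$.
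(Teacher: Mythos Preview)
Your proof is correct and follows the paper's strategy almost verbatim up through the key construction: induct on $c$, pass to a quasi-deformation carrying a degree-$2$ reducing element $\eta$, replace $\eta$ by $\eta^k$ with $k=(q+1)/2$ (the paper writes $q=2t-1$ and uses $\eta^t$, which is the same thing), obtain the short exact sequence $0\to\hat N\to K_{\eta^k}\to\Omega^q_A(\hat N)\to 0$, deduce $K^j=0$ for $j\in\{i+q,\dots,i+cq\}$, apply the inductive hypothesis to get $K^j=0$ for all $j>n$, and conclude that $E^m\cong E^{m+q+1}$ for all $m>n$.

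The only divergence is the endgame. The paper finishes by using the $(q+1)$-periodicity directly: for $0\le r\le c$ one has $E^{i+cq+r}\cong E^{i+(c-r)q}=0$, yielding $c+1$ \emph{consecutive} vanishings at $i+cq,\dots,i+cq+c$, and then Proposition~\ref{p1} applies. You instead factor the periodicity operator as a power of $\eta$ to get $\eta$-injectivity for $m>n$ and an $\eta$-isomorphism for $m\ge n+q$, then run a parity argument using the opposite-parity vanishings $E^i=0$ and $E^{i+q}=0$. Both routes are valid; the paper's is shorter and avoids identifying the connecting map with $\eta^k$ explicitly, while yours is self-contained and does not invoke Proposition~\ref{p1} as a black box. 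One small point in your sketch deserves tightening: when $i<n+q$ the vanishing $E^i=0$ is not yet in the $\eta$-isomorphism range, so ``forward propagation'' for that parity should first apply the $\eta^k$-isomorphism once to get $E^{i+q+1}=0$ (which has the same parity as $i$ and lies safely above $n+q$) before propagating by $\eta$.
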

\begin{proof}
We argue by induction on $c$. If $c=0$ then $\pd_R(N)<\infty$ and as we have seen in the proof of Proposition \ref{p1}, $\Ext^j_R(M,N)=0$ for all $j>\wgd_R(M)$.

Now let $c>0$, $q=2t-1$, $t\geq1$. As $\cid_R(N)<\infty$, by \cite[Lemma 2.1(i)]{B1}, there exists a quasi deformation $R\rightarrow A\twoheadleftarrow Q$ such that the $A$--module $A\otimes_RN$ has reducible complexity by an element $\eta\in\Ext^2_A(A\otimes_RN,A\otimes_RN)$. Set $\hat{N}=A\otimes_RN$ and $\hat{M}=A\otimes_RM$. As $\cid_A(\hat{N})<\infty$, by \cite[Lemma 2.1(ii)]{B1}, the element $\eta^t$ also reduces the complexity of $\hat{N}$. From the exact sequence
$$0\rightarrow\hat{N}\rightarrow K_{\eta^t}\rightarrow\Omega^q_A(\hat{N})\rightarrow0$$
we get the following long exact sequence of cohomology modules.
\begin{equation}\label{3}
\cdots\rightarrow\Ext^j_A(\hat{M},\hat{N})\rightarrow\Ext^j_A(\hat{M},K_{\eta^t})\rightarrow\Ext^j_A(\hat{M},\Omega^q_A(\hat{N}))
\rightarrow\cdots.
\end{equation}
As $\wgd_A(\hat{M})=\wgd_R(M)<\infty$, it is easy to see that
\begin{equation}\label{4}
\Ext^j_A(\hat{M},\Omega^q_A(\hat{N}))\cong\Ext^{j-q}_A(\hat{M},\hat{N})
\end{equation}
for all
$j>\wgd_A(\hat{M})+q$. Now from (\ref{3}) and (\ref{4}), it is obvious that $\Ext^j_A(\hat{M},K_{\eta^t})=0$ for $j\in\{i+q,i+2q,\cdots,i+cq\}$. As $\cx_A(K_{\eta^t})<\cx_A(\hat{N})=c$, by induction hypothesis we have
$\Ext^j_A(\hat{M},K_{\eta^t})=0$ for all $j>\wgd_A(\hat{M})$. Therefore from (\ref{3}) and (\ref{4}), we get
$$\Ext^j_A(\hat{M},\hat{N})\cong\Ext^{j-1}_A(\hat{M},\Omega^q_A(\hat{N}))\cong\Ext^{j-1-q}_A(\hat{M},\hat{N})$$ for all
$j>\wgd_A(\hat{M})+q+1$. Now it is easy to see that, $\Ext^j_A(\hat{M},\hat{N})=0$ for $i+cq\leq j\leq i+cq+c$.
Therefore, by Proposition \ref{p1}, $\Ext^j_A(\hat{M},\hat{N})=0$ for all $j>\wgd_A(\hat{M})$, and so
$\Ext^j_R(M,N)=0$ for all $j>\wgd_R(M)$.
\end{proof}
\begin{cor}\label{c}\emph{
Let $M$ and $N$ be $R$--modules of finite complete intersection dimensions. Set $c=\min\{\cx_R(M),\cx_R(N)\}$,
if there exist an odd number $q\geq1$, and $i>\cid_R(M)$ such that $\Ext^j_R(M,N)=0$, for $j\in\{i,i+q,\cdots,i+cq\}$, then
$\Ext^j_R(M,N)=0$ for all $j>\cid_R(M)$.}
\end{cor}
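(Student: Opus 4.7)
The plan is to derive Corollary~\ref{c} by a short case analysis on which module realises the minimum $c=\min\{\cx_R(M),\cx_R(N)\}$. The key observation is that two complementary vanishing results together cover both possibilities as soon as \emph{both} $M$ and $N$ have finite complete intersection dimension: Theorem~\ref{t1} of the present paper handles the case $c=\cx_R(N)$, while Bergh's \cite[Theorem~3.1]{B1} handles the case $c=\cx_R(M)$.

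First I would record the standard identification $\wgd_R(M)=\gd_R(M)=\cid_R(M)$ valid whenever $\cid_R(M)<\infty$, combining \cite[Theorem~1.4]{AGP} (which gives $\gd_R(M)=\cid_R(M)$) with \cite[Theorem~4.13]{AB} (which gives $\wgd_R(M)=\gd_R(M)$ when the latter is finite). This reconciles the thresholds appearing in the two source theorems, so that the hypothesis $i>\cid_R(M)$ is the same as $i>\wgd_R(M)$ and the two conclusions coincide.

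Then I would split into cases. If $c=\cx_R(N)$, the hypothesis reads $\Ext^j_R(M,N)=0$ for $j\in\{i,i+q,\ldots,i+\cx_R(N)q\}$ with $i>\wgd_R(M)$; since $\cid_R(N)<\infty$ and $\wgd_R(M)<\infty$, Theorem~\ref{t1} applies and yields $\Ext^j_R(M,N)=0$ for all $j>\wgd_R(M)=\cid_R(M)$. If instead $c=\cx_R(M)$, the hypothesis becomes $\Ext^j_R(M,N)=0$ for $j\in\{i,i+q,\ldots,i+\cx_R(M)q\}$ with $i>\cid_R(M)$, and since $\cid_R(M)<\infty$ Bergh's \cite[Theorem~3.1]{B1} applies directly, giving the same conclusion.

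I do not foresee a serious obstacle: the whole argument is a case split followed by invocation of two earlier theorems. The only delicate point is to make sure the $\wgd$-threshold in Theorem~\ref{t1} and the $\cid$-threshold in Bergh's theorem agree, which is precisely what the preliminary observation above achieves.
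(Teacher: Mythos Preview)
Your proposal is correct and matches the paper's own proof, which simply records $\wgd_R(M)=\gd_R(M)=\cid_R(M)$ via \cite[Theorem~1.4]{AGP} and then says the result follows from \cite[Theorem~3.1]{B1} and Theorem~\ref{t1}. You have merely spelled out the implicit case split on whether $c=\cx_R(M)$ or $c=\cx_R(N)$, and added the reference to \cite[Theorem~4.13]{AB} for the equality $\wgd_R(M)=\gd_R(M)$, which is a harmless clarification.
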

\begin{proof}
 By \cite[Theorem 1.4]{AGP}, $\wgd_R(M)=\gd_R(M)=\cid_R(M)$. Now the assertion is obvious by \cite[Theorem 3.1]{B1} and Theorem \ref{t1}.
\end{proof}

Let $M$ and $N$ be $R$--modules. It is well-known that if $M$ has finite Gorenstein dimension and $\pd_R(N)<\infty$ then
$\gd_R(M)=\sup\{i\mid\Ext^i_R(M,N)\neq0\}$. In the following, we generalize this result for modules with reducible complexity.
\begin{thm}\label{t2}\emph{
Let $M$, $N$ be nonzero $R$--modules. If $N$ has reducible complexity, $\wgd_R(M)<\infty$ and $\Ext^i_R(M,N)=0$ for $i\gg0$ then
\begin{itemize}
     \item[(i)] $\Ext^{\tiny{\wgd_R(M)}}_R(M,N)\cong\Ext^{\tiny{\wgd_R(M)}}_R(M,R)\otimes_R N$,
     \item[(ii)] $\wgd_R(M)=\sup\{i\mid\Ext^i_R(M,N)\neq0\}$.
\end{itemize}
}
\end{thm}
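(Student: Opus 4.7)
The plan is to prove (i) and (ii) simultaneously by induction on $c=\cx_R(N)$, setting $n=\wgd_R(M)$. The framework mirrors Proposition \ref{p1} and Theorem \ref{t1}: Bergh's reducible-complexity short exact sequence strips one unit of complexity from $N$, while the shift isomorphism $\Ext^i_R(M,\Omega^j_R(N))\cong \Ext^{i-j}_R(M,N)$ for $i>n+j$ (which rests on $\Ext^i_R(M,F)=0$ for free $F$ and $i>n$) transports inductive information across syzygies; Theorem \ref{a1} at level $n$ bridges $\Ext^n_R(M,N)$ and $\Ext^n_R(M,R)\otimes_R N$. The base case $c=0$ ($\pd_R(N)=t<\infty$) is handled by a secondary induction on $t$: for $t=0$, $N\cong R^s$ and both conclusions are immediate; for $t>0$, apply $\Ext^*_R(M,-)$ to a free cover $0\to \Omega_R(N)\to F\to N\to 0$ and combine the long exact sequence with the secondary hypothesis on $\Omega_R(N)$ and a five-lemma comparison against $\Ext^n_R(M,R)\otimes_R-$.

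For the inductive step $c>0$, pick $\eta$ reducing the complexity of $N$ with $|\eta|=q+1$, yielding $0\to N\to K_\eta\to \Omega^q_R(N)\to 0$ with $\cx_R(K_\eta)<c$; a direct Ext long exact sequence computation confirms $\Ext^i_R(M,K_\eta)=0$ for $i\gg 0$, so the inductive hypothesis applies to $K_\eta$ and supplies (i) and (ii) for $(M,K_\eta)$. For (ii), applying $\Ext^*_R(M,-)$ and using $\Ext^i_R(M,K_\eta)=0$ for $i>n$ gives $\Ext^i_R(M,\Omega^q_R(N))\cong \Ext^{i+1}_R(M,N)$ for $i>n$; combined with the shift isomorphism above, this forces the periodicity $\Ext^j_R(M,N)\cong \Ext^{j+q+1}_R(M,N)$ for every $j>n$, and the hypothesis that $\Ext^i_R(M,N)=0$ for $i\gg 0$ propagates vanishing to all $j>n$. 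For (i), I would invoke Theorem \ref{a1} at level $n$ and compare, via naturality in the second variable, with the same four-term sequence applied to $K_\eta$; the resulting commutative square has top row the right-exact sequence $\Ext^n_R(M,R)\otimes_R N\to \Ext^n_R(M,R)\otimes_R K_\eta\to \Ext^n_R(M,R)\otimes_R \Omega^q_R(N)\to 0$, bottom row $\Ext^n_R(M,N)\to \Ext^n_R(M,K_\eta)\to \Ext^n_R(M,\Omega^q_R(N))\to 0$ (right exact by (ii)), and middle vertical an isomorphism by induction. A diagram chase, supplemented by the $\Tor^R_*(\mathcal{T}_{n+1}M,-)$ long exact sequence and the stable isomorphism $\underline{\mathcal{T}_iM}\cong \underline{\Omega\mathcal{T}_{i+1}M}$ for $i>n$, then delivers the isomorphism for $N$; Nakayama applied to $\Ext^n_R(M,R)\otimes_R N$ finally yields $\Ext^n_R(M,N)\ne 0$, completing (ii).

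The main obstacle is the diagram chase in Step (i) of the inductive case: surjectivity/injectivity of the vertical arrow over $N$ a priori forces control of the corresponding arrow over $\Omega^q_R(N)$, but $\cx_R(\Omega^q_R(N))=c$ lies outside the inductive hypothesis. The safeguard is to recast the obstruction as a Tor statement via Theorem \ref{a1}: the inductive isomorphism for $K_\eta$ gives $\Tor^R_1(\mathcal{T}_{n+1}M,K_\eta)=0$, and the $\Tor^R_*(\mathcal{T}_{n+1}M,-)$ long exact sequence of the reducing SES, combined with the syzygy shift $\Tor^R_2(\mathcal{T}_{n+1}M,\Omega^q_R(N))\cong \Tor^R_{q+2}(\mathcal{T}_{n+1}M,N)$ and the stable isomorphism for $\mathcal{T}_{n+1}M$, collapses the residual high Tor against the already-established $\Tor^R_1(\mathcal{T}_{k+1}M,N)=0$ for $k>n$ (itself a consequence of (ii) via Theorem \ref{a1} at level $k$, since both $\Ext^k_R(M,R)$ and $\Ext^k_R(M,N)$ then vanish).
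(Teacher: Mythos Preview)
Your proposal is correct and lands on essentially the same argument as the paper, but the paper organizes it more cleanly. Rather than a simultaneous induction on (i) and (ii), the paper runs two \emph{separate} inductions on $c=\cx_R(N)$: first it proves the vanishing $\Ext^i_R(M,N)=0$ for $i>n$ exactly as you describe (periodicity via the reducing sequence and the shift isomorphism), and then, as an independent induction, it proves directly that $\Tor^R_i(\mathcal{T}_{n+1}M,N)=0$ for \emph{all} $i>0$; statement (i) then drops out of Theorem~\ref{a1}, and (ii) follows from Nakayama. Your ``safeguard'' paragraph is precisely this second induction in disguise: once you abandon the five-lemma chase on the $\Ext$ diagram (you are right that the vertical over $\Omega^q_R(N)$ is inaccessible), what remains is the paper's $\Tor$ argument, carried out with $\mathcal{T}_{t}M$ for $t=n+q+2$ and the periodicity $\Tor^R_i\cong\Tor^R_{i+q+1}$ against $\Tor^R_1(\mathcal{T}_{k+1}M,N)=0$ for $k>n$.

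One genuine difference is the base case $c=0$. You propose a secondary induction on $\pd_R(N)$ with a five-lemma comparison, which works and is self-contained. The paper instead dispatches it in one line via the stable isomorphisms $\underline{\mathcal{T}_iM}\cong\underline{\Omega\mathcal{T}_{i+1}M}$ for $i>n$: these give $\Ext^j_R(M,N)\cong\Tor^R_1(\mathcal{T}_{j+1}M,N)\cong\Tor^R_{t+1}(\mathcal{T}_{t+j+1}M,N)=0$ for $j>n$ and $t=\pd_R(N)$, and similarly force all $\Tor^R_i(\mathcal{T}_{n+1}M,N)$ to vanish. Either route is fine; the paper's is shorter because it exploits Theorem~\ref{a1} from the outset rather than reserving it for the inductive step.
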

\begin{proof}
Set $n=\wgd_R(M)$ and $c=\cx_R(N)$. First we show that $\Ext^i_R(M,N)=0$ for all $i>n$. We argue by induction on $c$. If $c=0$ then $\pd_R(N)<\infty$ and so as we have seen in the proof of Proposition \ref{p1}, $\Ext^i_R(M,N)=0$ for all $i>n$. Now let $c>0$ and $\eta\in\Ext^*_R(N,N)$ reduces the complexity of $N$. Consider the exact sequence
\begin{equation}\label{31}
0\rightarrow N\rightarrow K_{\eta}\rightarrow\Omega^q_R(N)\rightarrow0,
\end{equation}
 where $q=|\eta|-1$ and $\cx_R(K_{\eta})<c$. The exact sequence (\ref{31}), induces a long exact sequence
\begin{equation}\label{5}
\cdots\rightarrow\Ext^i_R(M,N)\rightarrow\Ext^i_R(M,K_{\eta})\rightarrow\Ext^i_R(M,\Omega^q_R(N))\rightarrow\cdots
\end{equation}
of cohomology modules. As $\Ext^i_R(M,R)=0$ for $i>n$, it is easy to see that
\begin{equation}\label{6}
\Ext^i_R(M,\Omega^q_R(N))\cong\Ext^{i-q}_R(M,N)
\end{equation}
for $i>n+q$. As $\Ext^i_R(M,N)=0$ for $i\gg0$, from (\ref{5}) and (\ref{6}) we see that $\Ext^i_R(M,K_{\eta})=0$ for $i\gg0$ and so by induction hypothesis $\Ext^i_R(M,K_{\eta})=0$ for all $i>n$. Therefore by (\ref{5}) and (\ref{6}), we have
$$\Ext^{i-q}_R(M,N)\cong\Ext^i_R(M,\Omega^q_R(N))\cong\Ext^{i+1}_R(M,N)$$ for $i\geq n+q+1$ and so $\Ext^i_R(M,N)=0$ for all $i>n$.

Now we show that if $\Ext^i_R(M,N)=0$ for all $i>n$ then $\Tor_i^R(\mathcal{T}_{n+1}M,N)=0$ for all $i>0$. We argue by induction on $c$. If $c=0$ then $\pd_R(N)<\infty$. As $\Ext^i_R(M,R)=0$ for all
$i>n$, $\underline{\mathcal{T}_iM}\cong\underline{\Omega\mathcal{T}_{i+1}M}$ for all $i>n$ and so it is obvious that $\Tor_i^R(\mathcal{T}_{n+1}M,N)=0$ for all $i>0$. Now let $c>0$ and $\eta\in\Ext^*_R(N,N)$ reduces the complexity of $N$. Consider the exact sequence
\begin{equation}\label{7}
0\rightarrow N\rightarrow K_{\eta}\rightarrow\Omega^q_R(N)\rightarrow0,
\end{equation}
where $\cx_R(K_{\eta})<c$. As we have seen in the proof of the first part $\Ext^i_R(M,K_{\eta})=0$ for all $i>n$ and so by induction hypothesis $\Tor_i^R(\mathcal{T}_{n+1}M,K_{\eta})=0$ for all $i>0$. As $\Ext^i_R(M,K_{\eta})=0$ for all $i>n$, by Theorem \ref{a1}, $\Tor_1^R(\mathcal{T}_{i+1}M,K_{\eta})=0$ for all $i>n$ and since $\underline{\mathcal{T}_iM}\cong\underline{\Omega\mathcal{T}_{i+1}M}$ for all $i>n$, it is easy to see that $\Tor_i^R(\mathcal{T}_{j}M,K_{\eta})=0$ for all $i>0$ and $j\geq n+1$.
Set $t=q+n+2$. The exact sequence (\ref{7}) induces the long exact sequence
\begin{equation}
\cdots\rightarrow\Tor_i^R(\mathcal{T}_tM,N)\rightarrow\Tor_i^R(\mathcal{T}_tM,K_{\eta})\rightarrow\Tor_{i+q}^R(\mathcal{T}_tM,N)
\rightarrow\cdots
\end{equation}
of homology modules. Therefore
\begin{equation}\label{8}
\Tor_i^R(\mathcal{T}_tM,N)\cong\Tor_{i+q+1}^R(\mathcal{T}_tM,N) \text{   for all  } i>0.
\end{equation}
As $\Ext^i_R(M,N)=0$ for all $i>n$, $\Tor_1^R(\mathcal{T}_iM,N)=0$ for all $i>n+1$ by Theorem \ref{a1}. As $\underline{\mathcal{T}_iM}\cong\underline{\Omega\mathcal{T}_{i+1}M}$ for all $i>n$,
$\Tor_i^R(\mathcal{T}_tM,N)\cong\Tor_1^R(\mathcal{T}_{t-i+1}M,N)=0$ for $1\leq i\leq q+1$ and so $\Tor_i^R(\mathcal{T}_tM,N)=0$ for all $i>0$, by (\ref{8}). Therefore $\Tor_i^R(\mathcal{T}_{n+1}M,N)=0$ for $i>0$ and so $\Ext^n_R(M,R)\otimes_RN\cong\Ext^n_R(M,N)$, by Theorem \ref{a1}.
As $R$ is local and $N$, $\Ext^n_R(M,R)$ are non-zero, $\Ext^n_R(M,N)\neq0$ and so $n=\sup\{i\mid\Ext^i_R(M,N)\neq0\}$.
\end{proof}

\bibliographystyle{amsplain}
{\bf Acknowledgements.}
I am very grateful to Petter Andreas Bergh and my thesis adviser Mohammad Taghi Dibaei for their assistance in the preparation of this article. This work was done while the author was visiting Trondheim, Norway, Autumn 2011. I thank the Algebra Group at the Institutt for Matematiske Fag, NTNU, for their hospitality.

\end{document}